\newtheorem{theorem}{Theorem}[section]
\theoremstyle{definition}
\newtheorem{definition}[theorem]{Definition}
\newtheorem{example}[theorem]{Example}
\theoremstyle{remark}
\newtheorem{remark}[theorem]{Remark}
\numberwithin{equation}{section}
\newcommand{\hl}{\frac12}
\newcommand{\mip}{\bar{y}}
\newcommand{\para}[1]{\paragraph{\em #1}}
\begin{document}

\title[B-stability of numerical integrators on Riemannian manifolds]{B-stability of numerical integrators on Riemannian manifolds}
\author[M. Arnold]{Martin Arnold }
\address{Institute of Mathematics, Martin Luther University Halle-Wittenberg, 06099 Halle (Saale), Germany}
\email{martin.arnold@mathematik.uni-halle.de}

\author[E. Celledoni]{Elena Celledoni}
\address{Department of Mathematical Sciences, Norwegian University of Science and Technology, 7034 Trondheim, Norway}
\email{elena.celledoni@ntnu.no}

\author[E. Çokaj]{Ergys Çokaj*}
\address{Department of Mathematical Sciences, Norwegian University of Science and Technology, 7034 Trondheim, Norway}
\email{ergys.cokaj@ntnu.no}
\thanks{* Corresponding author}
\thanks{This work was supported by the European Union’s Horizon 2020
research and innovation programme under the Marie Skłodowska-Curie
grant agreement No. 860124. This publication reflects only the author’s
view and the Research Executive Agency is not responsible for any use
that may be made of the information it contains.}

\author[B. Owren]{Brynjulf Owren}
\address{Department of Mathematical Sciences, Norwegian University of Science and Technology, 7034 Trondheim, Norway}
\email{brynjulf.owren@ntnu.no}

\author[D. Tumiotto]{Denise Tumiotto}
\address{Institute of Mathematics, Martin Luther University Halle-Wittenberg, 06099 Halle (Saale), Germany}
\email{denise.tumiotto@mathematik.uni-halle.de}

\subjclass[2020]{Primary: 65L20. Secondary: 34C40, 53-08, 58D17, 65L05}

\keywords{B-stability, Riemannian manifolds, contractivity, numerical integrators on manifolds}
\begin{abstract}
We propose a generalization of nonlinear stability of numerical one-step integrators to Riemannian manifolds in the spirit of Butcher’s notion of B-stability. Taking inspiration from Simpson-Porco and Bullo, we introduce non-expansive systems on such manifolds and define B-stability of integrators. In this first exposition, we provide concrete results for a geodesic version of the Implicit Euler (GIE) scheme. We prove that the GIE method is B-stable on Riemannian manifolds with non-positive sectional curvature. We show through numerical examples that the GIE method is expansive when applied to a certain non-expansive vector field on the 2-sphere, and that the GIE method does not necessarily possess a unique solution for large enough step sizes. Finally, we derive a new improved global error estimate for general Lie group integrators.
\end{abstract}
\maketitle
\thispagestyle{fancy}
\lhead[]{This manuscript has been published on the Journal of Computational Dynamics\\
Vol. 11, No. 1, January 2024, pp. 92-107\\
\textcolor{blue}{\href{https://www.aimsciences.org//article/doi/10.3934/jcd.2024002}{doi:10.3934/jcd.2024002}}}
\rhead[]{ }

\section{Introduction}
Stability is a fundamental property of numerical methods for stiff nonlinear ordinary differential equations. 
It is important for controlling the growth of error in the numerical approximation and is used in combination with local error estimates to obtain bounds for the global error. Stability bounds can also in some situations be used to ensure the existence and uniqueness of a solution to the algebraic equations arising from implicit integrators.
In the literature, one can find a large variety of stability definitions for numerical integrators with various different aims. Some of them apply to linear test equations, others are of a more general nature and apply to nonlinear problems with certain prescribed properties. Most of the stability definitions found in the literature are developed for problems modeled on linear spaces.
In particular, there is a well-established non-linear stability theory, where an inner product norm is used to measure the distance between two solutions and the corresponding numerical approximations.
Pioneering contributions to this theory were made by Dahlquist and Butcher in the mid-1970s \cites{dahlquist1975,butcher1975}, in the wake of the legendary numerical analysis conference in Dundee, 1975. The notions of G-stability for multi-step methods \cite{dahlquist1975} and B-stability of Runge--Kutta methods \cite{butcher1975} were developed.
The overall idea of B-stability is that whenever the norm of the difference between two solutions of the ODE is monotonically non-increasing, the numerical method should exhibit a similar behavior, that is, the difference in norm between the two corresponding numerical solutions should not increase over a time step. Much is known about B-stable Runge--Kutta methods, and there is even an algebraic condition on the coefficients $(A,b)$ of a method that ensures its B-stability.
A key ingredient is the one-sided Lipschitz condition, also called a monotonicity condition, on the ODE vector field.
We refer the reader to the excellent monographs \cites{hairer96sod, dekker1984} for a detailed treatment of the various definitions of stability and B-stability in particular.

We remark that whether a particular ODE system is non-expansive depends on the choice of inner product norm, but the notion of a B-stable Runge--Kutta method does not, see \cite{hairer96sod}*{p. 182}.
In this paper, we shall be concerned with {\em unconditional stability}, meaning that step sizes $h\in(0,\infty)$ are 
allowed. This excludes all explicit integrators, and it makes it necessary to assume that both the flow of the ODE vector field and the numerical method map are well defined for all positive $t$. Dahlquist and Jeltsch \cite{dahlquist1979gdo} introduced 
{\em generalized disks of contractivity} in order to consider also the case in which limitations on the ODE vector field and the step size are imposed.

We shall here consider systems of ODEs whose solutions evolve on a smooth manifold. 
We are primarily interested in numerical integrators which are intrinsic, that are not developed for a particular choice of local coordinates, or based on a specific embedding of the manifold into an ambient space.
There are several such numerical methods available in the literature.

Crouch and Grossman \cite{crouch93nio} proposed to build integrators by composing flows of so-called frozen vector fields, and these methods were later extended to a more general format in \cite{celledoni03cfl} called {\em Commutator-free Lie group methods}.
Munthe--Kaas introduced numerical integrators for homogeneous spaces \cite{munthe-kaas99hor} by equipping the manifold with a left transitive Lie group action which was used together with the exponential map to transform the ODE vector field locally to a vector field on the underlying Lie algebra. Its flow is approximated by any classical Runge--Kutta method, and the result is mapped back to the manifold by composing the group action with the exponential map.

In computational mechanics there were early contributions to numerical integration on particular manifolds, such as the rotation group $SO(3)$ and the special Euclidean group $SE(3)$. A landmark paper in the design of conservative methods for Hamiltonian systems on Lie groups is the one by Lewis and Simo \cite{lewis94caf}.
For rod dynamics, an important paper was that of Simo and Vu-Quoc \cite{simo88otd} who developed a geometrically exact formulation for rods undergoing large motions, and for the time stepping they devised a version of the Newmark methods applicable to Lie groups. These methods can be generalized to the so-called $\alpha$-methods \cite{hilber1977ind} in a Lie group setting, see \cites{arnold07cot, arnold15eao}.
Parametrization of the manifold in question, such as the rotation group, plays a significant role in computational mechanics, for efficiency, accuracy, and storage requirements. When using (minimal) local coordinates for global simulation, one inevitably runs into problems with singularities, these issues have been studied and amended by several authors, e.g. \cites{terze16sft, holzinger21tio}.
Hamiltonian systems are often formulated on cotangent bundles, in which case symplectic integrators can be derived through the discretization of a variational problem, this approach is sometimes named discrete mechanics. The pioneering work by Marsden and West \cite{marsden01dma} developed this theory for Euclidean spaces, and it has later been generalized to Lie groups in a number of papers \cites{lee07lgv,bogfjellmo16hos,celledoni14ait,demoures15dvl, hall17lgs, hante2021, leitz18glg}.

Finally, on a Riemannian manifold, it is natural to base the numerical schemes primarily on the Riemannian exponential map.
Leimkuhler and Patrick \cite{leimkuhler96asi} derived a symplectic integrator for Riemannian manifolds, and in \cite{celledoni02aco} the authors suggest using Riemannian normal coordinates to define a retraction map.

For an in-depth account of Lie group methods, we refer to \cites{iserles2000,christiansen11tis, celledoni14ait, owren18lgi, celledoni21lgi} and references therein. 

In this paper we shall make the first attempt to generalize B-stability to Riemannian manifolds, replacing the inner product norm with the Riemannian distance function. 
We take inspiration from the work of Simpson-Porco and Bullo \cite{SIMPSONPORCO201474} who considered contraction properties of a continuous system.
In Section~\ref{sec:contr_sys} we define what we mean by a non-expansive system on a Riemannian manifold, and we 
state the definition of B-stability of a general numerical method in this setting.
Then, in Section~\ref{sec:num_int_on_manifolds} we first
present two examples of numerical methods: the geodesic versions of the implicit Euler method (GIE) and the implicit midpoint rule (GIMP).
Then we prove a B-stability result for the GIE method in the case that the manifold has non-positive sectional curvature.
We also provide numerical experiments for a particular vector field on the two-sphere, $S^2$, showing that neither the GIE nor the GIMP method is B-stable on this manifold which has positive sectional curvature. We briefly discuss also for this example a non-uniqueness issue with the GIE method which is different from what is known from the Euclidean setting.
Finally, in Section~\ref{sec:glerr} we present a bound for the global error of numerical methods, based on the monotonicity condition.

\section{Non-expansive systems}\label{sec:contr_sys}
We begin by briefly introducing some notation and terminology, mostly adhering to the monograph by Lee \cite{lee18itr}.
A Riemannian manifold is a pair $(M, g)$, where $M$ is a smooth manifold and $g$ is a smoothly varying inner product defined on each tangent space $T_pM,\;p\in M$. We will use interchangeably the notations $g(\cdot,\cdot)$ and $\langle\cdot,\cdot\rangle$.
Associated to $(M,g)$ is the Levi-Civita connection, the unique affine connection $\nabla$, which for any three vector fields $X, Y, Z$ on $M$ satisfies
$X \langle Y,Z\rangle=\langle \nabla_XY,Z\rangle+ \langle Y,\nabla_X Z\rangle$ and $[X,Y]=\nabla_XY-\nabla_YX$. 
The connection also defines the covariant derivative of vector fields along curves, we use the notation $D_tV(t)$ to denote the covariant derivative of $V(t)$ along $\gamma(t)$, see \cite{lee18itr}*{Theorem 4.21}.
A curve $\gamma: [a,b]\rightarrow M$ is geodesic if it satisfies the equation $D_t \dot{\gamma}(t)=0$ along $\gamma(t)$. 
A geodesic that connects two points $p$ and $q$ is called a geodesic segment.
If this second order differential equation, together with initial data $\gamma(0)=p\in M,\;\dot{\gamma}(0)=v_p\in T_pM$ yields a solution $\gamma(t)$, $t\in[0,t^*]$, 
thus $\exp_p: T_pM\rightarrow M$.
A similar notation is used for the $t$-flow, $\exp(tX)$, of a vector field $X$ on $M$, it is the diffeomorphism on $M$, $p\mapsto y(t)$
where $\dot{y}=X|_y,\ y(0)=p$, and its domain of definition may be $t$-dependent.
A numerical method on M is a map $\phi_{t,X}: M\rightarrow M$ that approximates the flow map $\exp(tX)$.
A set $\mathcal{U}\subseteq M$ is geodesically convex if, for each $p, q \in \mathcal{U}$, there is a unique minimizing geodesic segment from $p$ to $q$ contained entirely in $\mathcal{U}$. A vector field $X$ is forward complete on $\mathcal{U}$ if for every $p\in \mathcal{U}$, $\exp(tX)p$ is defined for all $t\geq 0$.
If for every $(t, p)\in [0, \infty) \times \mathcal{U}$ it holds that $\exp(tX)p\in \mathcal{U}$, we say that $\mathcal{U}$ is forward $X$-invariant. Similarly, for a mapping $\rho$ the set $\mathcal{U}$ is $\rho$-invariant if $\rho(y)\in\mathcal{U}$ for any $y\in \mathcal{U}$. We denote the length of a curve $\gamma:[a, b] \rightarrow M$ as $\ell(\gamma)=\int_a^b\|\dot{\gamma}(t)\| d t$, where $\|v\|:=\langle v, v\rangle^{\frac{1}{2}}$ is the $g$-norm.
The metric induces a distance function between pairs of points $p, q \in M$, $d(p, q)=\inf _{\gamma_{p \rightarrow q}} \ell \left(\gamma_{p \rightarrow q}\right)$, where $\gamma_{p\rightarrow q}$ is any continuous curve connecting $p$ and $q$. The following definition replaces the one-sided Lipschitz condition on a Riemannian manifold.
\begin{definition}\label{def:growth_bound}
Let $(M,\langle\cdot,\cdot\rangle)$ be a Riemannian manifold and let $\mathcal{U}\subset M$.   
We say that the vector field $X$ satisfies a monotonicity condition on the set $\mathcal{U}$ with constant $\nu\in\mathbb{R}$ if for every $x\in \mathcal{U}$ and $v_x \in T_x M$, it holds that 
\begin{equation}\label{eq:X_contractive_in_U}
\left\langle\nabla_{v_x} X, v_x\right\rangle \leq \nu\left\|v_x\right\|^2.
\end{equation} 
\end{definition}
Consider for every $x\in \mathcal{U}$, the linear operator $\left.\nabla X\right|_x:v_x \mapsto \nabla_{v_x}X$ on $T_xM$. 
The constant $\nu$ can be chosen as 
\begin{equation}\label{eq:nu_sup_mu_g}
    \nu = \sup_{x \in \mathcal{U}} \mu_g(\left.\nabla X\right|_x),
\end{equation}   
where $\mu_g$ is the logarithmic $g$-norm of $\left.\nabla X\right|_x$. 
For a linear operator $A: T_xM\rightarrow T_xM$, its logarithmic $g$-norm is defined as \cite{dekker1984}
$$
\mu_g(A) = \sup_{0\neq v\in T_xM}\frac{g(Av,v)}{g(v,v)}.
$$
In local coordinates $\mathbf{x}=(x_1,\ldots,x_m)$ on $M$, we write the vector field as $X=X^i(\mathbf{x})\partial_i$ and the metric tensor $g$ is represented by the matrix $\mathbf{g}(\mathbf{x})$ with elements $\mathbf{g}_{ij}=g(\partial_i,\partial_j)$. The operator $\nabla X$ has the matrix representation $\mathcal{A}(X)$
where $\mathcal{A}(X)^k_i = \partial_iX^k+\Gamma^k_{ij}X^j$ and where $\Gamma^k_{ij}$ are the Christoffel symbols of the connection.
We can now formulate the logarithmic $g$-norm of $\nabla X$ pointwise as 
$$
\mu_g(\nabla X) = \max \lambda\left[\mathbf{g}^{\sfrac{1}{2}}(\mathbf{x}) \mathcal{A}(X) \mathbf{g}^{-\sfrac{1}{2}}(\mathbf{x}) + \mathbf{g}^{-\sfrac{1}{2}}(\mathbf{x}) \mathcal{A}(X)^\top \mathbf{g}^{\sfrac{1}{2}}(\mathbf{x})\right],
$$
i.e., the largest eigenvalue of the matrix in square brackets, see also \cite{davydov22nec}.

\begin{theorem}\label{thm:bound_exact_flow}
Let $(M, g)$ be a Riemannian manifold,  $\mathcal{U}\subset M$  a geodesically convex set, and let $X$ be a vector field on $M$ satisfying the monotonicity condition \eqref{eq:X_contractive_in_U} on $\mathcal{U}$ with a constant  $\nu \in \mathbb{R}$. Suppose that for any $x_0, y_0 \in \mathcal{U}$, there is a $t^*>0$ such that $\exp (t X) x_0$ and $\exp (t X) y_0$ exist and are contained in $\mathcal{U}$ for every $t\in[0, t^*]$. 
Then,  it holds that
\begin{equation}\label{eq:distance_statement}
d\left(\exp (t X) x_0, \exp (t X) y_0\right) \leq  d\left(x_0, y_0\right)\mathrm{e}^{\nu t} \quad \text{for every }t\in[0, t^*].
\end{equation} 
\end{theorem}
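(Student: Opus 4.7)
The plan is to compare the two trajectories by evolving the minimizing geodesic connecting their initial conditions. Since $\mathcal{U}$ is geodesically convex, let $\gamma:[0,1]\to \mathcal{U}$ be the unique unit-speed (up to rescaling) minimizing geodesic from $x_0$ to $y_0$, so that $\ell(\gamma)=d(x_0,y_0)$. Define the variation through integral curves of $X$,
\begin{equation*}
\Gamma(t,s)=\exp(tX)\gamma(s),\qquad (t,s)\in[0,t^*]\times[0,1],
\end{equation*}
so that $\Gamma(0,s)=\gamma(s)$, $\Gamma(t,0)=\exp(tX)x_0$, $\Gamma(t,1)=\exp(tX)y_0$, and $\partial_t\Gamma=X\circ\Gamma$. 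For each fixed $t$, the curve $s\mapsto \Gamma(t,s)$ connects the two flowed points, so its length $L(t)$ bounds $d(\exp(tX)x_0,\exp(tX)y_0)$ from above.

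The core calculation is to estimate the variation field $J(t,s):=\partial_s\Gamma(t,s)$. By the symmetry of the Levi-Civita connection (symmetry lemma for variations) together with $\partial_t\Gamma=X\circ\Gamma$, one obtains
\begin{equation*}
D_tJ = D_t\partial_s\Gamma = D_s\partial_t\Gamma = D_s(X\circ\Gamma) = \nabla_J X.
\end{equation*}
Hence, differentiating the squared norm along $t$ and invoking the monotonicity condition \eqref{eq:X_contractive_in_U} on $\mathcal{U}$,
\begin{equation*}
\frac{d}{dt}\|J(t,s)\|^2 = 2\langle D_tJ,J\rangle = 2\langle \nabla_J X,J\rangle \leq 2\nu\|J(t,s)\|^2.
\end{equation*}
A pointwise Grönwall argument in $t$ then yields $\|J(t,s)\|\leq \|J(0,s)\|\mathrm{e}^{\nu t}$ for each $s$.

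To conclude, I would integrate in $s$:
\begin{equation*}
d(\exp(tX)x_0,\exp(tX)y_0)\leq L(t)=\int_0^1\|J(t,s)\|\,ds\leq \mathrm{e}^{\nu t}\int_0^1\|\dot\gamma(s)\|\,ds = \mathrm{e}^{\nu t}\,\ell(\gamma) = \mathrm{e}^{\nu t}d(x_0,y_0),
\end{equation*}
using that $J(0,s)=\dot\gamma(s)$ and $\ell(\gamma)=d(x_0,y_0)$ by the minimality of $\gamma$.

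The step I expect to require the most care is not the calculation itself but the justification that the whole variation $\Gamma([0,t^*]\times[0,1])$ lies inside $\mathcal{U}$, so that the monotonicity bound and the covariant derivative identities are legitimately applied at every $(t,s)$. The hypothesis explicitly provides forward existence in $\mathcal{U}$ only for the two distinguished trajectories $\exp(tX)x_0$ and $\exp(tX)y_0$, not for the intermediate initial points $\gamma(s)$. I would handle this by a continuity/continuation argument: for small $t$ the image of $\Gamma$ stays in $\mathcal{U}$ by continuous dependence, the estimate then holds on that subinterval, and one extends by the standard open–closed argument up to $t^*$, possibly shrinking $t^*$ if $\mathcal{U}$ is not forward $X$-invariant. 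Geodesic convexity of $\mathcal{U}$ is used only at $t=0$; for $t>0$ the bounding curve $\Gamma(t,\cdot)$ need not be a geodesic, but it is a perfectly admissible competitor in the infimum defining $d$.
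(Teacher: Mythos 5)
Your proposal is correct and follows essentially the same route as the paper: evolve the minimizing geodesic under the flow, set $\Gamma(t,s)=\exp(tX)\gamma(s)$, use the symmetry lemma to identify $D_t\partial_s\Gamma=\nabla_{\partial_s\Gamma}X$, invoke the monotonicity condition, apply Gr\"onwall, and bound the distance by the length of the transported curve. The only (harmless) difference is that you run Gr\"onwall pointwise in $s$ on $\|\partial_s\Gamma\|^2$ and then integrate, while the paper applies it directly to the length $\ell(t)$; your remark about verifying that the whole variation stays in $\mathcal{U}$ is a point the paper's proof simply asserts.
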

\begin{remark}
The condition that the set $\mathcal{U}$ is geodesically convex can be weakened by introducing the notion of a $K$-reachable set as in \cite{SIMPSONPORCO201474}.
\end{remark}
\begin{proof}
The construction for the proof is illustrated in Figure \ref{fig:contr_image}. 
Since $\mathcal{U}$ is geodesically convex, there is a unique minimizing geodesic $\gamma(s)\in \mathcal{U}$
connecting $x_0, y_0 \in \mathcal{U}$, with $\gamma(0) = x_0$ and $\gamma(1) = y_0$. We will be using the notation 
$\Gamma(s, t) := \exp(tX)\gamma(s)$, as in 
\cite{lee18itr}*{Chapter 6}, and $\Gamma(s, t)$ is contained in $\mathcal{U}$.  For a fixed $t\in[0, t^*]$, consider the length $\ell(t)$ of the curve  $s \mapsto \Gamma(s, t)$, $s \in [0,1]$, that is 
\begin{equation}\label{eq:l_of_t_def}
    \ell(t) = \int_0^1 \langle \partial_s \Gamma(s,t), \partial_s \Gamma(s,t)\rangle^{\frac{1}{2}} ds,
\end{equation} 
and we have $d(x_0, y_0) = \ell(0)$. Let 
\begin{equation}\label{eq:S_T_short_notation}
    S(s,t):= \partial_s\Gamma(s,t), \quad T(s,t):= \partial_t\Gamma(s,t).
\end{equation} 
We will use that
\begin{equation*}   
D_t S(s, t)=D_s T(s, t)=D_sX|_{\Gamma(s,t)},
\end{equation*}   
following from the symmetry lemma \cite{lee18itr}*{Lemma 6.2}.
Differentiating with respect to $t$, using the chain rule and the properties of the Levi-Civita connection, we have
\begin{align*}
    \frac{\mathrm{d}\ell(t)}{\mathrm{d}t}  & =\int_0^1 \frac{\partial_t \langle S(s,t), S(s,t)\rangle}{2\|S(s,t)\|} ds = \int_0^1 \frac{\langle D_tS(s,t), S(s,t)\rangle}{\|S(s,t)\|} ds \\
    & = \int_0^1 \frac{\langle D_sT(s,t), S(s,t)\rangle}{\|S(s,t)\|} ds  = \int_0^1 \frac{\langle D_sX(\Gamma(s,t)), S(s,t)\rangle}{\|S(s,t)\|} ds  \\
    & \leq \int_0^1 \frac{\nu\langle S(s,t), S(s,t) \rangle}{\|S(s,t)\|} ds  = \nu \,\ell(t),
\end{align*}
where the last inequality follows from the assumption that  
$X$ satisfies the monotonicity condition \eqref{eq:X_contractive_in_U}.
By Gronwall's lemma, we obtain the inequality
\begin{equation*}   \label{eq:gronwall_lemma}
    \ell(t)\leq \ell(0)\mathrm{e}^{\nu t}, \quad \textrm{ for each } t\in[0, t^*],
\end{equation*} 
and conclude that
\begin{equation*}   \label{eq:dist_length}
    d(\exp(tX)x_0, \exp(tX)y_0)\leq \ell(t) \leq \ell(0) \mathrm{e}^{\nu t} = d(x_0, y_0) \mathrm{e}^{\nu t}.
\end{equation*} 
\end{proof}

\begin{remark}
Choosing $\nu = \sup\{\|\nabla X|_{p}\|:p \in \Gamma([0, t^*]\times[0,1])\}$ leads to a bound similar to the one in Theorem 1.2 by Kunzinger et al. in \cite{kunzinger06gge}.
\end{remark}

\begin{figure}
    \includegraphics[trim = 0 30 0 0 ]{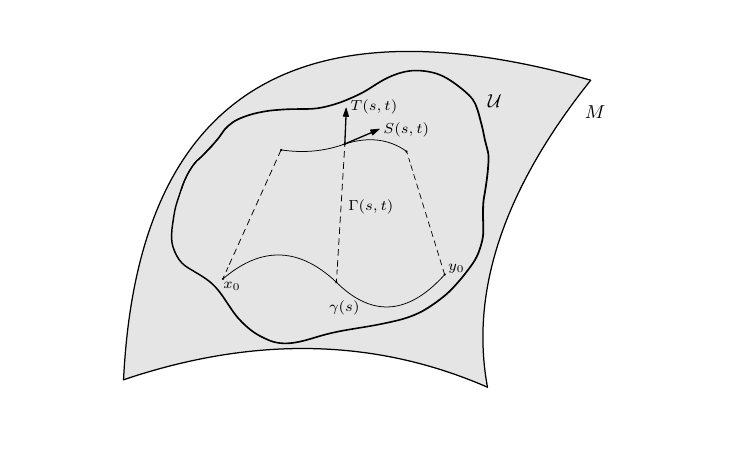}
    \caption{Construction for the proofs of Theorems \ref{thm:bound_exact_flow} and \ref{thm:b_stab_gie}.}
    \label{fig:contr_image}
\end{figure}

The next definition is inspired by the definition of contracting systems by Simpson-Porco and Bullo in \cite{SIMPSONPORCO201474}.
\begin{definition}[Non-expansive system]
    Let $(M,g)$ be a Riemannian manifold. Let $\mathcal{U} \subseteq M$ be an open, geodesically convex set and $X \in \mathfrak{X}(M)$. If
    \begin{enumerate}[label=(\roman*)]
        \item $X$ is forward complete on $\mathcal{U}$,
        \item $\mathcal{U}$ is forward $X$-invariant,
        \item $X$ satisfies the monotonicity condition \eqref{eq:X_contractive_in_U} on $\mathcal{U}$ with $\nu \leq 0$,
    \end{enumerate}
the quadruple $(\mathcal{U}, X, g, \nu)$ is called a non-expansive system.
\end{definition}

We are now ready to give the definition of a B-stable numerical method on Riemannian manifolds.

\begin{definition}[B-stability]\label{def:b_stability}

Let $(M,g)$ be a Riemannian manifold and let $\phi_{h, X}$ be a numerical method on $M$. Suppose that for any non-expansive
system $(\mathcal{U}, X, g, \nu)$ on $M$, it holds that
\begin{enumerate}[label=(\roman*)]
\item   $\phi_{h, X}$ is forward complete on $\mathcal{U}$, i.e., $\phi_{h,X}$ is well defined for all $h>0$, and\label{item:method_forw_complete_on_u}
    \item $\mathcal{U}$ is forward  $\phi_{h, X}$-invariant for all $h>0$.
 \label{item:u_forw_method_invariant}
\end{enumerate}
If 
\begin{equation*}   \label{eq:b_stab}
    d(\phi_{h, X}(x_0), \phi_{h, X}(y_0)) \leq d(x_0, y_0), \quad x_0, y_0 \in \mathcal{U}, h>0,
\end{equation*} 
then $\phi_{h,X}$ is called B-stable.
\end{definition}

\section{Numerical integrators on manifolds and B-stability}\label{sec:num_int_on_manifolds} 
\para{Geodesic Explicit Euler (GEE) method}
The simplest numerical method defined on a Riemannian manifold is the Geodesic Explicit Euler method
\begin{equation}\label{eq:gee_method}
    y_{n+1} = \exp_{y_{n}}\left(hX|_{y_{n}}\right),
\end{equation} 
that can not be unconditionally stable, but will be used for comparison in the numerical experiments in Example 3.2.
\para{Geodesic Implicit Euler (GIE) method}
We consider the following definition of the Implicit Euler method in a Riemannian manifold 
\begin{equation}\label{eq:gie_method}
    y_n = \exp_{y_{n+1}}\left(-hX|_{y_{n+1}}\right).
\end{equation} 
This reduces to the classical implicit Euler method
when the manifold is the Euclidean space.
\para{Geodesic Implicit Midpoint (GIMP) method}
Similarly, we consider the implicit midpoint rule on a Riemannian manifold:
\begin{equation}\label{eq:gimp}
\begin{aligned}
    y_n &= \exp_{\mip}\left(-\hl h X|_{\mip}\right), \\
    y_{n+1} &= \exp_{\mip}\left(\hl h X|_{\mip}\right).
\end{aligned}
\end{equation} 
This method can be found in Zanna et al. \cite{zanna01aas} for the case of Lie group integrators.
It is a symmetric method, but it is not generally symplectic. In \cite{mclachlan17amv} a symplectic method was found for products of 2-spheres, $(S^2)^d$, that happens to be a time reparametrization of \eqref{eq:gimp}. It is called the {\em spherical midpoint method} (SPHMP). Applied to a single copy of $S^2$ it reads in Cartesian coordinates
\begin{equation} \label{eq:sphmp}
y_{n+1} = y_n + h X|_{\mip},\quad \mip=\frac{y_n+y_{n+1}}{\|y_n+y_{n+1}\|}.
\end{equation} 

\subsection{The case with non-positive sectional curvature}\label{sec:b_stab_gie}

In the next theorem, we prove the B-stability of the GIE method on Hadamard manifolds, i.e., manifolds with non-positive sectional curvature.
\begin{theorem}[B-stability of the GIE method]\label{thm:b_stab_gie}
    Let $(M, g)$ be a Riemannian manifold with non-positive sectional curvature. Then, the GIE method \eqref{eq:gie_method} is B-stable.
\end{theorem}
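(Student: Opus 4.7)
The plan is to mimic the length-shrinking argument of Theorem~\ref{thm:bound_exact_flow}, replacing the flow $\exp(tX)$ by the family of backward geodesics that defines the GIE update, and controlling the resulting length via Jacobi field comparison in non-positive curvature.

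Fix $x_0,y_0\in\mathcal{U}$ and suppose the GIE produces $x_1,y_1$ with $x_0=\exp_{x_1}(-hX|_{x_1})$ and $y_0=\exp_{y_1}(-hX|_{y_1})$. Let $\gamma:[0,1]\to\mathcal{U}$ be the minimizing geodesic from $x_0$ to $y_0$. Using that $M$ has non-positive sectional curvature (so $\exp_p$ has no conjugate points) together with the monotonicity $\nu\le 0$ (which controls the linearization of $z\mapsto\exp_z(-hX|_z)$), one obtains via an implicit function argument a smooth curve $s\mapsto y(s)$ in $\mathcal{U}$ with $y(0)=x_1$, $y(1)=y_1$, and $\exp_{y(s)}(-hX|_{y(s)})=\gamma(s)$. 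Form the geodesic variation
\[
\Gamma(s,t):=\exp_{y(s)}(-tX|_{y(s)}),\qquad s\in[0,1],\ t\in[0,h],
\]
so that $\Gamma(s,0)=y(s)$ and $\Gamma(s,h)=\gamma(s)$. For each fixed $s$ the curve $t\mapsto\Gamma(s,t)$ is a geodesic, hence $J_s(t):=\partial_s\Gamma(s,t)$ is a Jacobi field along it.

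The key step is showing that $\ell(t):=\int_0^1\|J_s(t)\|\,ds$ is non-decreasing in $t$. The Jacobi equation and the non-positivity of the sectional curvature give
\[
\tfrac{d^2}{dt^2}\|J_s(t)\|^2 \;=\; 2\|D_tJ_s\|^2 \;-\; 2\langle R(J_s,\partial_t\Gamma)\partial_t\Gamma,J_s\rangle \;\ge\; 0,
\]
so $\|J_s(t)\|^2$ is convex in $t$. By the symmetry lemma, $D_tJ_s(0)=D_s\partial_t\Gamma(s,0)=-\nabla_{\dot y(s)}X$, and the monotonicity assumption with $\nu\le 0$ yields
\[
\tfrac{d}{dt}\|J_s(t)\|^2\big|_{t=0} \;=\; -2\langle\nabla_{\dot y(s)}X,\dot y(s)\rangle \;\ge\; -2\nu\|\dot y(s)\|^2 \;\ge\; 0.
\]
A convex function with non-negative derivative at $0$ is non-decreasing on $[0,\infty)$, so each $\|J_s(t)\|$, and hence $\ell(t)$, is non-decreasing. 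Since $\gamma$ is minimizing, $\ell(h)=d(x_0,y_0)$; since $s\mapsto y(s)$ is a curve joining $x_1$ to $y_1$, $\ell(0)\ge d(x_1,y_1)$. Combining, $d(x_1,y_1)\le\ell(0)\le\ell(h)=d(x_0,y_0)$.

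The main obstacle I anticipate is establishing the two non-expansiveness conditions in the B-stability definition, namely forward completeness and forward $\phi_{h,X}$-invariance of $\mathcal{U}$, together with the existence of the connecting curve $y(s)$. All three reduce to showing that $F_h:z\mapsto\exp_z(-hX|_z)$ is a diffeomorphism of $\mathcal{U}$ onto itself for every $h>0$: non-positive curvature keeps $d\exp$ non-singular, and $\nu\le 0$ makes $dF_h$ invertible, but packaging these into a clean global inversion statement on $\mathcal{U}$ is the delicate technical point. Note that once existence is granted, uniqueness of the GIE step is automatic from the contraction estimate itself, since two preimages $y_1,\tilde y_1$ of the same point would satisfy $d(y_1,\tilde y_1)\le d(y_0,y_0)=0$.
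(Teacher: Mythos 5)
Your argument is essentially the paper's proof: the same geodesic variation $\Gamma(s,t)=\exp_{y(s)}(-tX|_{y(s)})$, the same Jacobi-equation/nonpositive-curvature computation showing $\|\partial_s\Gamma\|^2$ is convex in $t$, the same use of the monotonicity condition (with $\nu\le 0$) to get a nonnegative $t$-derivative at $t=0$, and the same conclusion that the length of the variation curves is nondecreasing from the updated points back to the initial points. The only substantive difference is the lifting/global-inversion step you flag as delicate: the paper sidesteps it entirely, since its definition of B-stability already assumes $\phi_{h,X}$ is well defined on $\mathcal{U}$ and leaves $\mathcal{U}$ invariant, so it simply sets $\gamma_1(s)=\phi_{h,X}(\gamma_0(s))$ for an arbitrary curve $\gamma_0$ from $x_0$ to $y_0$ of length within $\varepsilon$ of $d(x_0,y_0)$ and lets $\varepsilon\to 0$, with no implicit-function argument required.
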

\begin{proof}
Let $(\mathcal{U}, X, g, 0)$ be a non-expansive system of ODEs, and consider $\phi_{h,X}$ with step size $h>0$.  Let $\gamma_0(s), s\in [0,1]$ be a curve in $\mathcal{U}$ such that $\gamma_0(0)=x_0\in \mathcal{U}$ and $\gamma_0(1)=y_0\in \mathcal{U}$, and set $\gamma_1(s)=\phi_{h, X}\left(\gamma_0(s)\right)$. By assumption $\gamma_1(s)$ is well defined and contained in $\mathcal{U}$. Consider the one-parameter family of curves  
\begin{equation*}   \label{eq:gamma_exp_y1}
\Gamma(s, t):=\exp _{\gamma_1(s)}(-t h X|_{\gamma_1(s)}).
\end{equation*} 
We have $\gamma_1(s)=\Gamma(s, 0)$ and $\gamma_0(s)=\Gamma(s, 1)$. 
Now, using as earlier the notation $S(s,t):= \partial_s\Gamma(s,t), \quad T(s,t):= \partial_t\Gamma(s,t)$, we have
\begin{equation*}   
\ell(t)=\int_0^1 \langle S(s,t), S(s,t)\rangle^{\frac{1}{2}} ds \quad \text { and } \quad
\frac{d\ell}{dt}(t) =\int_0^1 \frac{\partial_t \langle S(s,t), S(s,t)\rangle}{2\|S(s,t)\|} ds.
\end{equation*} 
Let $f(t)=\frac{1}{2} \partial_t\langle S(s, t), S(s, t)\rangle=\left\langle D_t S(s, t), S(s, t)\right\rangle$. We differentiate with respect to $t$ and apply the Jacobi equation together with the definition of sectional curvature and obtain
\begin{equation}
\begin{aligned}
\frac{df}{dt}(t) & =\left\langle D_t^2 S(s, t), S(s, t)\right\rangle+\left\|D_t S(s, t)\right\|^2 \label{eq:fprime} \\
& = -\left\langle R\big(S(s, t), T(s, t)\big) T(s, t), S(s, t)\right\rangle+\left\|D_t S(s, t)\right\|^2  \\
& = - K(S, T)\left(\left\|S\right\|^2\left\| T\right\|^2-\langle S, T\rangle^2\right) + \left\|D_t S(s, t)\right\|^2. 
\end{aligned}
\end{equation}  
Here $R$ is the Riemannian curvature tensor and $K$ is the sectional curvature. Since by assumption $K(S, T) \leq 0$ it follows that $\frac{df}{dt}(t) \geq 0$ for $t \in [0,1]$. By the symmetry lemma \cite{lee18itr}*{Lemma 6.2}, we get
\begin{equation*}   \label{eq:use_sym_lemma}
D_t S(s, t)|_{t=0}=D_s T(s, 0)=-h D_s X|_{\gamma_1(s)}.
\end{equation*} 
Then \begin{equation*}   \label{eq:fof0}
f(0)=-h\left\langle D_s X|_{\gamma_1(s)}, S(s, 0)\right\rangle \geq 0,
\end{equation*} 
since $X$ satisfies the monotonicity condition with $\nu=0$. So, we have 
\begin{equation*}   \label{eq:foftgeq0}
f(t)=f(0)+\int_0^t \frac{df}{d\tau}(\tau)\, d \tau \geq 0,
\end{equation*} 
which allows us to conclude that $\frac{d\ell}{dt}(t) \geq 0$.
Thus
\begin{equation}\label{eq:lof1_greater_lof0}
\operatorname{length}\left(\gamma_1(s)\right) = \ell(0)\leq\ell(1)=\operatorname{length}\left(\gamma_0(s)\right).
\end{equation} 
For any given $\varepsilon > 0$, we have $\ell(1) < d(\gamma_0(0), \gamma_0(1)) + \varepsilon$. By \eqref{eq:lof1_greater_lof0} and the definition of distance we obtain
\begin{equation*}   \label{eq:dist_length_num_flow}
    d(\gamma_1(0), \gamma_1(1))\leq \ell(0) \leq \ell(1)\leq d(\gamma_0(0), \gamma_0(1)) + \varepsilon.
\end{equation*} 
Since $\varepsilon$ is arbitrary, the condition for B-stability is satisfied.
\end{proof}

\begin{example}
\begin{figure}[ht]
    \centering
    \includegraphics[width=\textwidth]{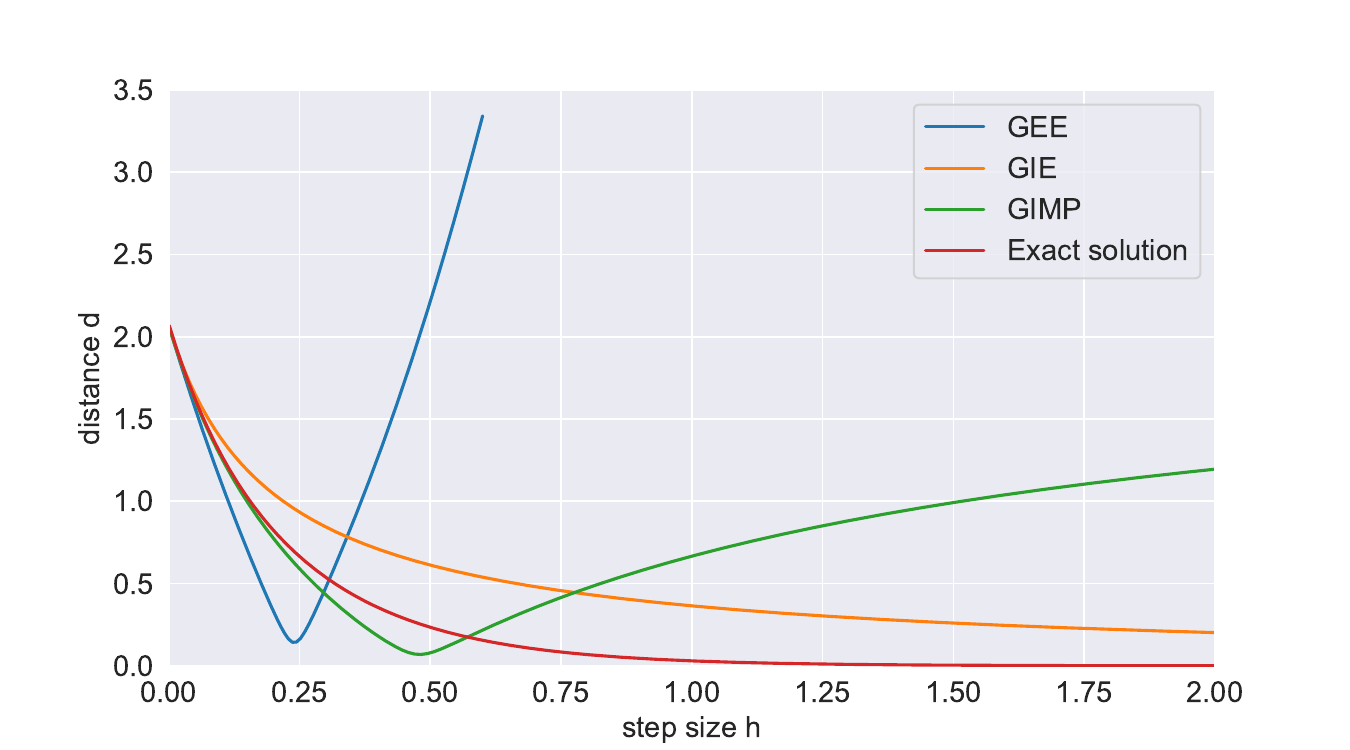}
    \caption{Riemannian distance of two solutions after one step plotted for increasing values of the step size $h$ with the same initial values. }
    \label{fig:SPD}
\end{figure}[$\mathbb{S}^n_{++}$]\label{example:dti}
The space $\mathbb{S}^n_{++}$ of symmetric positive definite matrices is a well-known example of a manifold with negative sectional curvature. Its tangent space at a point $A$, denoted by $T_A\mathbb{S}^n_{++}$, can be
identified as the set of $n \times n$ symmetric matrices. $\mathbb{S}^n_{++}$ is equipped pointwise with the metric 
\begin{equation}\label{eq:metric_sn++}
g_A(U, V) = \text{trace}\left(A^{-1}UA^{-1}V\right),
\end{equation} 
where $A\in \mathbb{S}^n_{++}$ and $U, V \in T_A\mathbb{S}^n_{++}$, \cites{sill1964symplectic, pennec2006riemannian}. 

The manifold $\mathbb{S}^n_{++}$ can be used as a model space for simple beam models, such as the Elastica \cite{erchuan2019riemannian}, or in diffusion tensor magnetic resonance imaging (DT-MRI) \cites{cheng2012efficient, rathi2007segmenting, celledoni18dissipative, fletcher2007riemannian}, via 3D tensors, i.e., $3\times 3$ SPD matrices. Another interesting application is the segmentation and recognition of images and videos represented by SPD matrices, \cites{arandjelovic2005face, huang2015face, tuzel2006region}.
Such applications usually involve averaging SPD matrices, for example, to collect noisy measurements of the object under consideration. In $\mathbb{S}^n_{++}$, a suitable mean was proposed by Karcher \cite{karcher1977riemannian}. Given $k$ matrices $Y_1, \ldots, Y_k \in \mathbb{S}^n_{++}$, we search for a matrix $X^*\in \mathbb{S}^n_{++}$, the Karcher mean,  such that
\begin{equation}\label{eq:karcher_mean}
    X^*=\underset{X \in \mathbb{S}^n_{++}}{\arg \min }\frac{1}{2} \sum_{j=1}^k d^2\left(X, Y_j\right),
\end{equation} 
i.e., $X^*$ is such that $\text{grad}\frac{1}{2} \sum_{j=1}^k d^2\left(X, Y_j\right) = 0$. Here, $d(X, Y)$ is the Riemannian distance between $X$ and $Y$ given as \cite{gregorio2013proximal}
\begin{equation}\label{eq:dist_sn++}
    d(X, Y) = \sqrt{\sum_{i = 1}^n \log^2\left(\lambda_i\left(X^{-\frac{1}{2}}YX^{-\frac{1}{2}}\right)\right)}, 
\end{equation} 
with $\lambda_i\left(X^{-\frac{1}{2}}YX^{-\frac{1}{2}}\right)$ being the $i^{\text{th}}$ eigenvalue of $X^{-\frac{1}{2}}YX^{-\frac{1}{2}}, i = 1, \ldots, n$, and $\text{grad}$ is the Riemannian gradient found e.g. in \cite{gregorio2013proximal}*{Lemma 2}  \begin{equation}\label{eq:grad_karcher_mean}
    \operatorname{grad}\frac{1}{2}d^2(X, Y)|_X=-X^{\frac{1}{2}} \log \left(X^{-\frac{1}{2}} Y X^{-\frac{1}{2}}\right) X^{\frac{1}{2}}.
\end{equation} 
For $\mathbb{S}_n^{++}$, the exponential map is explicitly known in terms of the matrix exponential and matrix square roots as
\begin{equation*}   
    \exp_A(tV) = A^{\frac{1}{2}}\textrm{e}^{t A^{-\frac{1}{2}}VA^{-\frac{1}{2}}}A^{\frac{1}{2}},
\end{equation*} 
for $A\in \mathbb{S}^n_{++}$ and $V \in T_A\mathbb{S}^n_{++}.$
The objective function $f(X) = \frac{1}{2}\sum_{j=1}^k d^2\left(X, Y_j\right)$ is defined as the geometric mean of symmetric positive definite matrices in \cite{moakher2005differential} and \cite{bhatia2006riemannian}, and is known to have a unique minimizer $X^*$ as in \eqref{eq:karcher_mean}, \cite{karcher1977riemannian}. 
There is no known closed-form solution for \eqref{eq:karcher_mean} and usually,  iterative methods are used to compute the Karcher mean.\\
In Figure~\ref{fig:SPD}, the Riemannian distance of two solutions after one step is plotted for increasing values of the step size $h$ with the same pair of initial values. One can observe the non-expansive behavior of the GIE and the GIMP method and the expansive behavior of the Geodesic Explicit Euler (GEE) method. The GEE solution is discontinued at $h=0.6$ for presentation purposes. The exact solution is calculated with strict tolerance by \texttt{odeint} of \texttt{scipy.integrate} in \texttt{Python}.
\end{example}

\subsection{The case with positive sectional curvature: The 2-sphere}\label{sec:2-sphere}

In this section, we consider systems on the 2-sphere $S^2$ with the standard metric. We show through an example that the GIE and GIMP methods fail to be B-stable. 

\subsubsection{Killing vector fields}\label{subsec:Killing}
A {\em Killing vector field} is a vector field $X$ such that the Lie derivative $\mathcal{L}_X g = 0$. This implies that
\begin{align*}
 0 &= (\mathcal{L}_Xg)(Y,Z) = X\langle Y,Z\rangle - \langle \mathcal{L}_XY,Z\rangle - \langle Y, \mathcal{L}_XZ\rangle   \\
 &= \langle \nabla_XY, Z\rangle + \langle Y,\nabla_XZ\rangle - \langle\nabla_XY-\nabla_Y X, Z\rangle
 - \langle Y, \nabla_XZ-\nabla_ZX\rangle\\
 &= \langle \nabla_Y X, Z\rangle +  \langle \nabla_Z X, Y\rangle,
\end{align*}
so that the monotonicity condition \eqref{eq:X_contractive_in_U} holds with $\nu=0$ for any such vector field.
In this sense one could say that the Killing vector fields represent a borderline case for non-expansive systems.
\subsubsection{A Killing vector field on $S^2$}\label{subsec:KillingS2}
Consider the vector field $X(y) = e_3 \times y$, which describes rotations on the 2-sphere around the $z$-axis. Using Cartesian coordinates, the GIE method \eqref{eq:gie_method} on the 2-sphere takes the form
\begin{equation}\label{eq:gie_sphere}
    y_0 = \exp_{y_1}(-hX|_{y_1}) = \cos \alpha \cdot y_1 - \frac{\sin \alpha}{\alpha} \cdot (hX|_{y_1}), \quad \alpha = \| -hX|_{y_1}\|.
\end{equation} 
We apply \eqref{eq:gie_sphere} to two initial points lying on the open northern hemisphere and measure the distance between the points for increasing values of the time step. The distance between two points $y_0, z_0\in S^2$  is calculated as
\begin{equation}\label{eq:dist_2_sphere}
    d(y_0, z_0) = \arccos{(y_0\cdot z_0)}.
\end{equation} 
\begin{figure}[ht]
\includegraphics[trim = 20 40 22 60, width=0.42\textwidth]{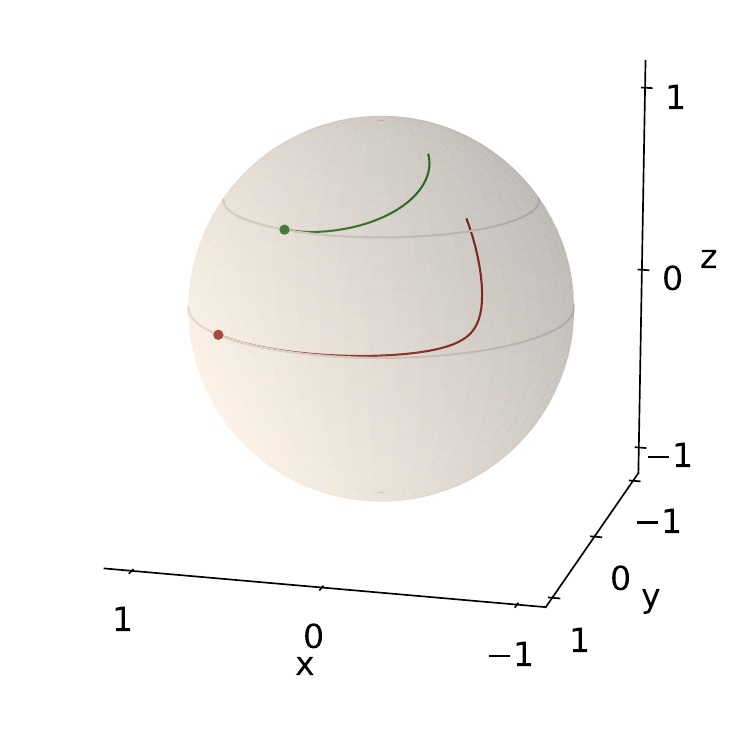}
\includegraphics[trim = 0 0 25 25, width = 0.57\textwidth]{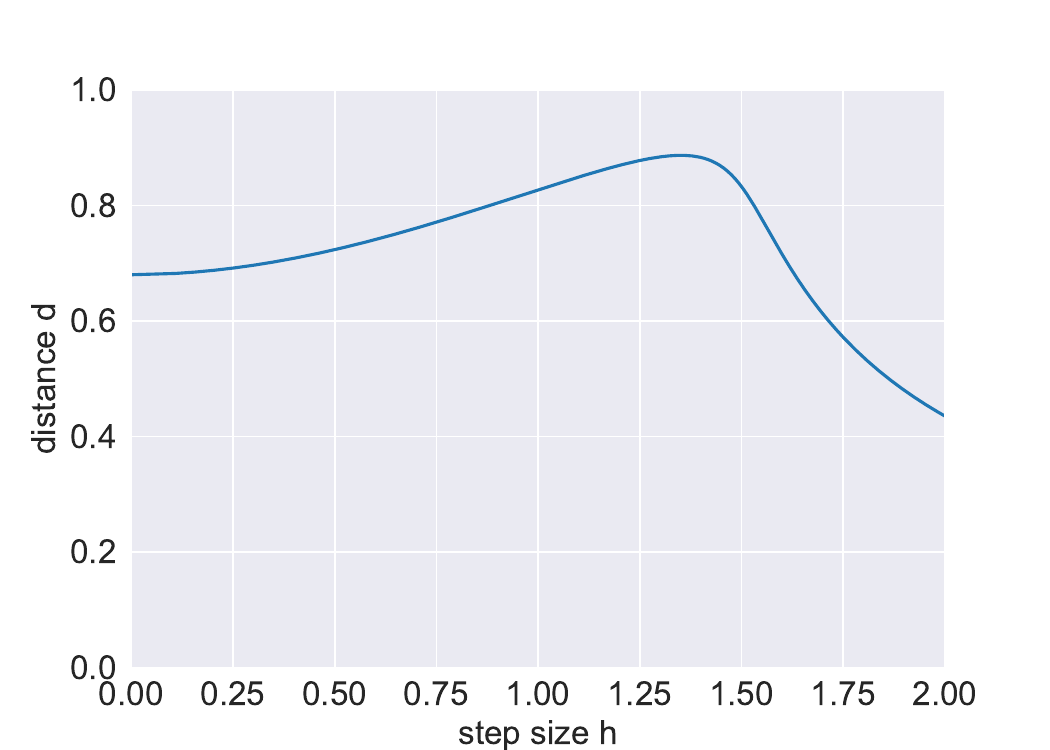}
    \caption{One step of GIE method for two initial points with increasing step size $h$ (left) and their Riemannian distance (right).}
\label{fig:2_trajectories}
\end{figure}
Figure \ref{fig:2_trajectories} (left) shows one step performed with the GIE method starting from two initial points with increasing step size $h$. In Figure \ref{fig:2_trajectories} (right), the distance between the trajectories is shown as a function of $h$. As can be seen from the distance curve, the GIE method shows an expansive behavior, and it is in fact small values of the step size that cause problems. In Figure \ref{fig:sphmp_gimp_spheres}, the SPHMP and the GIMP methods are tested on the same vector field. Both methods are a reparametrization of the exact solution for this problem.

\begin{figure}[ht]
    \centering
    \includegraphics[width=0.49\textwidth]{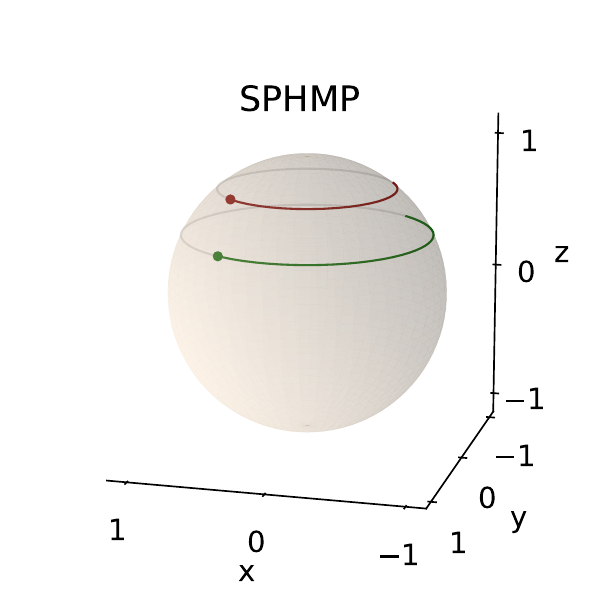}
    \includegraphics[width=0.49\textwidth]{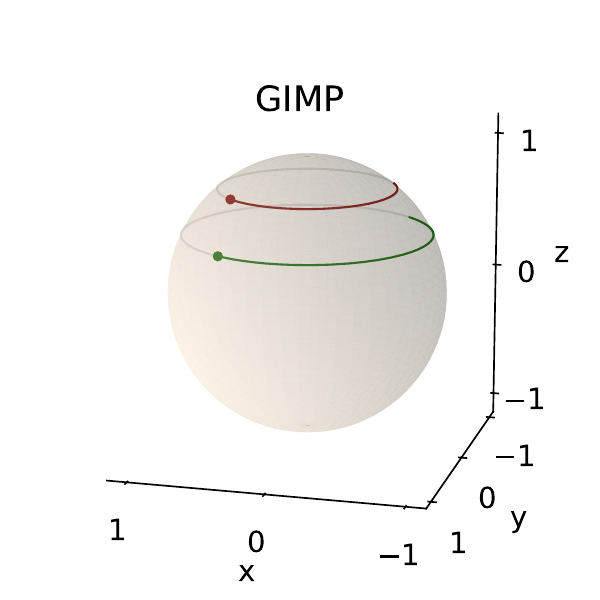}
    \includegraphics[trim = 20 0 40 0, width=\textwidth]{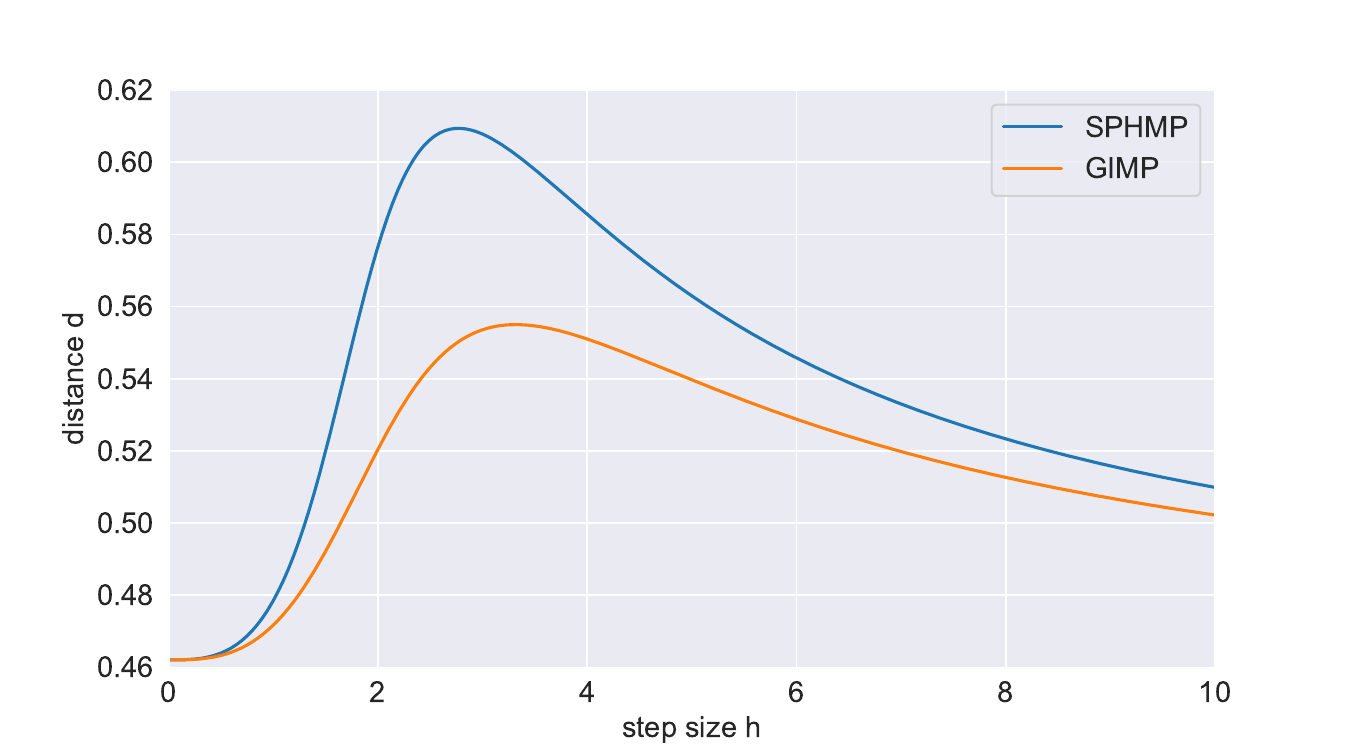}
    \caption{Top: One step of SPHMP \eqref{eq:sphmp} (left) and  GIMP \eqref{eq:gimp} (right) method for the same two initial points with increasing step size $h$. Bottom: Riemannian distance of two numerical solutions after one step plotted for increasing values of the step size $h$.}
    \label{fig:sphmp_gimp_spheres}
\end{figure}

\para{A non-uniqueness issue}
It is well-known from the theory of implicit Runge--Kutta methods that the conditions for the uniqueness of the solution to the implicit equations that must be solved in each time step involve the one-sided Lipschitz condition.
In the monograph by Hairer and Wanner \cite{hairer96sod} a precise result is given, and we include it here for completeness.
\begin{theorem}[Theorem 14.4 in \cite{hairer96sod}]\label{thm:hw_thm-14-4}
Consider a differential equation satisfying a one-sided Lipschitz condition with constant $\nu$. If the Runge--Kutta matrix $A$ is invertible and $h\nu < \alpha_0(A^{-1})$, then the system of equations to be solved in each time step possesses at most one solution.
\end{theorem}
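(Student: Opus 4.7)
The plan is to apply the classical ``subtract and test'' argument in the spirit of \cite{hairer96sod}*{Ch.~IV.12}. Let the internal stage equations be $Y_i=y_n+h\sum_{j=1}^s a_{ij}f(Y_j)$, $i=1,\ldots,s$, and suppose $\{Y_i\}$ and $\{\tilde Y_i\}$ are two solutions for the same $y_n$. Setting $\Delta Y_i:=Y_i-\tilde Y_i$ and $\Delta F_i:=f(Y_i)-f(\tilde Y_i)$, subtraction gives $\Delta Y_i=h\sum_j a_{ij}\Delta F_j$, and invertibility of $A$ rearranges this to
\begin{equation*}
  h\,\Delta F_i=\sum_{j=1}^s\omega_{ij}\,\Delta Y_j,\qquad \omega_{ij}:=(A^{-1})_{ij}.
\end{equation*}

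The heart of the argument is to test this identity against suitable weights. I would use that $\alpha_0(A^{-1})$ is, by definition, the supremum of those $\alpha\in\mathbb{R}$ for which there exists a positive diagonal matrix $D=\mathrm{diag}(d_1,\ldots,d_s)$ with
$\sum_{i,j} d_i\omega_{ij}\langle u_i,u_j\rangle \ge \alpha\sum_i d_i\|u_i\|^2$
for every tuple $(u_1,\ldots,u_s)$. Fix such a $D$ for some $\alpha=\alpha_0(A^{-1})-\varepsilon$, take the inner product of the displayed relation with $d_i\Delta Y_i$, and sum over $i$ to obtain
\begin{equation*}
  h\sum_i d_i\langle\Delta Y_i,\Delta F_i\rangle = \sum_{i,j}d_i\omega_{ij}\langle\Delta Y_i,\Delta Y_j\rangle \ge \bigl(\alpha_0(A^{-1})-\varepsilon\bigr)\sum_i d_i\|\Delta Y_i\|^2.
\end{equation*}
On the other hand, the one-sided Lipschitz hypothesis applied stage by stage and weighted with $d_i$ yields the complementary bound $h\sum_i d_i\langle\Delta Y_i,\Delta F_i\rangle\le h\nu\sum_i d_i\|\Delta Y_i\|^2$. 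Combining the two gives $\bigl(\alpha_0(A^{-1})-\varepsilon-h\nu\bigr)\sum_i d_i\|\Delta Y_i\|^2\le 0$, and since $h\nu<\alpha_0(A^{-1})$ I can choose $\varepsilon$ small enough to make the prefactor strictly positive, forcing $\Delta Y_i=0$ for every $i$.

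I expect the main obstacle to lie in pinning down the precise definition of $\alpha_0$ and in the $\varepsilon$-bookkeeping that accompanies it: the supremum in the definition of $\alpha_0(A^{-1})$ need not be attained, so one must either approximate the optimal $D$ or work throughout with a slightly relaxed constant and pass to the limit at the end. A minor subtlety worth noting is that the statement asserts \emph{uniqueness} only, not existence, so the argument above — which controls the difference of two hypothetical solutions — is exactly what is called for; no fixed-point or homotopy argument is needed.
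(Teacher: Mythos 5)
Your argument is correct and is essentially the standard proof of this result: the paper itself states Theorem~\ref{thm:hw_thm-14-4} only ``for completeness'' and gives no proof, deferring entirely to Hairer--Wanner, and your subtract-and-test derivation (invert $A$, test $h\,\Delta F_i=\sum_j\omega_{ij}\Delta Y_j$ against $d_i\Delta Y_i$ with a near-optimal positive diagonal $D$ from the definition of $\alpha_0(A^{-1})$, and compare with the one-sided Lipschitz bound) is precisely the argument found there. The $\varepsilon$-relaxation to handle non-attainment of the supremum and the passage from the scalar quadratic-form definition of $\alpha_0$ to vector-valued stage differences (positive semidefiniteness of the symmetric part tensored with the identity) are handled correctly, so no gap remains.
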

We note that $\alpha_0$ is a function that depends only on the Runge--Kutta coefficients, and it is known that 
$\alpha_0(A^{-1})=1$ for the implicit Euler method. Thus, for $\nu\leq 0$ there is a unique solution for every $h>0$.
But the Killing vector field example on $S^2$ shows that this result is not generally true in Riemannian manifolds.
In fact, for this example, we see from \eqref{eq:gie_sphere} that the last component is decoupled from the other two. Writing for simplicity $y_0^3=: z_0$ and $y_1^3=:z$ we need to solve the scalar equation
\begin{equation} \label{eq:y3eq}
z_0 = \cos\left(h\sqrt{1-z^2}\right) z =: q\left(z,h\right)
\end{equation} 
with respect to $z$. One has $q\left(0,h\right)=0$ for all $h$, and $q\left(z_k,h\right)=0$ for $z_{k}=\pm \sqrt{1-\left(\frac{\pi}{h}\right)^2\left(\frac12+k\right)^2}$ for any $k\in\mathbb{N}$ such that $\left(\frac{\pi}{h}\right)^2\left(\frac12+k\right)^2\leq 1$.
In fact, for $h\in I_0=\left(0, \frac{\pi}{2}\right]$, $q\left(z,h\right)$ has precisely one zero, and for 
$h\in I_m =\left(\left(2m-1\right)\frac{\pi}{2}, \left(2m+1\right)\frac{\pi}{2}\right]$, $m\geq 1$, $q\left(z,h\right)$ has $2m+1$ zeros in $[-1,1]$.
All the zeros are simple and therefore there is a sign change in $q(z,h)$ at each of them. It follows that  $\exists\epsilon>0$ such that
if $h\in I_m$ and $|z_0|<\epsilon$, then  \eqref{eq:y3eq} has at least $2m+1$ solutions. One easily verifies that for each of these values of the last component, there is a unique solution for the first two components. We illustrate the structure of the solution in Figure~\ref{fig:bifurcation}. 

\begin{figure}[ht]
    \centering
    \includegraphics[width = \textwidth]{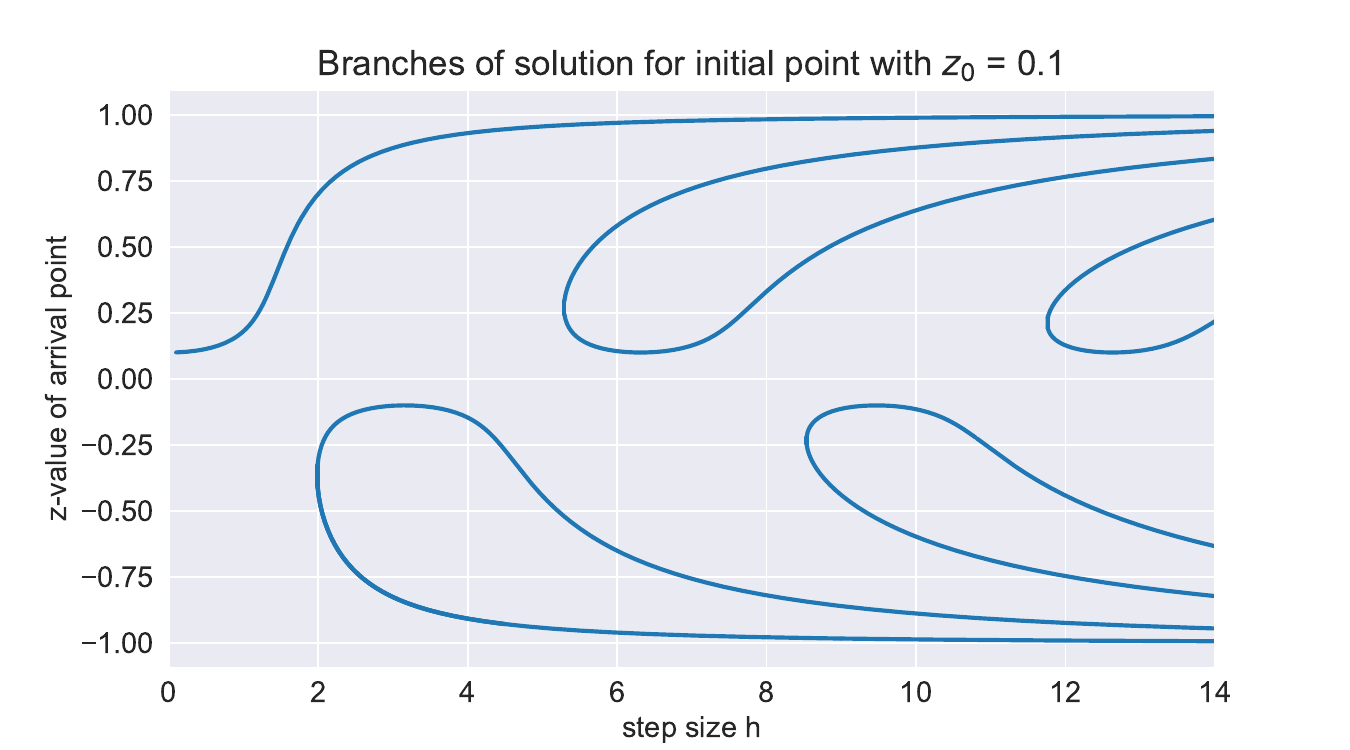}
    \caption{A bifurcation diagram for solutions to the equation \eqref{eq:y3eq}.}
    \label{fig:bifurcation}
\end{figure}

\subsubsection{Relation to other Lie group integrators}
For some homogeneous manifolds $M=G/H$, with $H$ a closed Lie subgroup of the Lie group $G$, the Geodesic Implicit Euler method \eqref{eq:gie_method} is equivalent to the implicit Lie-Euler method for a specific choice of isotropy, \cites{munthe-kaas99hor,celledoni14ait}, i.e., of the map $a: G/H\rightarrow \mathfrak{g}$ which is used to define the Lie group method:
\begin{equation}\label{eq:imp_lie_euler_method}
    y_{n+1} = \exp(ha(y_{n+1}))\,y_{n},
\end{equation} 
with $\mathfrak{g}$ the Lie algebra of $G$ and  $\exp:\mathfrak{g} \rightarrow G$ the Lie group exponential. See \cite{iserles2000} for an introduction to Lie group methods. The following example on $S^2$ illustrates the impact of the choice of isotropy on the approximation of the solution obtained via \eqref{eq:imp_lie_euler_method}.

\begin{example}
Consider a vector field on the 2-sphere $S^2=SO(3)/SO(2)$. 
In Cartesian coordinates, embedding $S^2$ in $\mathbb{R}^3$, the ODE  can be written as 
\begin{equation}\label{eq:gr_fl_sys}
    \dot{y} = a(y) \times y,
\end{equation} 
where $\times$ denotes the vector cross product.
By the identification of $(\mathbb{R}^3, \times)$ with the Lie algebra $\mathfrak{so}(3)$, we have that
$a: S^2 \rightarrow \mathbb{R}^3\simeq \mathfrak{so}(3)$. The action of the Lie group exponential $\exp: \mathbb{R}^3\simeq \mathfrak{so}(3)\rightarrow SO(3)$ on a vector $p\in\mathbb{R}^3$ takes the simple form:
$$\exp(a)p=p+\frac{\sin(\alpha)}{\alpha}\,a\times p-\frac{1-\cos(\alpha)}{\alpha^2}\,a\times (a \times p) , \qquad \alpha=\|a\|, \qquad a\in\mathbb{R}^3\simeq\mathfrak{so}(3).$$
We remark that for a given vector field $X(y)=a(y)\times y$, the choice of $a(y)$  is not unique. In fact, we can replace $a(y)$ with its projection orthogonal to $y$ without changing $X(y)$, and similarly
replacing $a(y)$ by $a(y) + c(y)\,y$, with 
$ c: S^2\rightarrow \mathbb{R}$, 
does not alter $X(y)$:
\[
\dot{y} = a(y) \times y = (a(y) + c(y)y) \times y, \qquad y^\top a(y)=0.
\]
On the other hand, the numerical approximation obtained by the method \eqref{eq:imp_lie_euler_method},
\[ 
y_{n+1} = \exp(h(a(y_{n+1}) + c(y_{n+1})\,y_{n+1}))y_{n},
\]
does depend on the choice of $c(y)$, see also Figure~\ref{fig:isotropy_plot_different_c_values}. Similarly, we cannot expect that in general different Lie group integrators have the same stability behavior when applied to the same vector field $X$.\\
In Figure~\ref{fig:isotropy_plot_different_c_values} we illustrate the isotropy issue by applying the Implicit Lie--Euler method
to the problem
\begin{equation}\label{eq:iso_eq}
    \dot{y} = e_3\times y = (e_3 + (c-1) y_3\,y )\times y
\end{equation} 
for $c\in[-2,2]$ and step size $h=2$. This means that $c=0$ corresponds to the GIE method, whereas for $c=1$ the exact solution is reproduced. We observe that the difference in solutions may expand, contract or stay constant, depending on the choice of isotropy parameter $c$.

\begin{figure}[ht]
    \includegraphics[width = 0.42\textwidth]{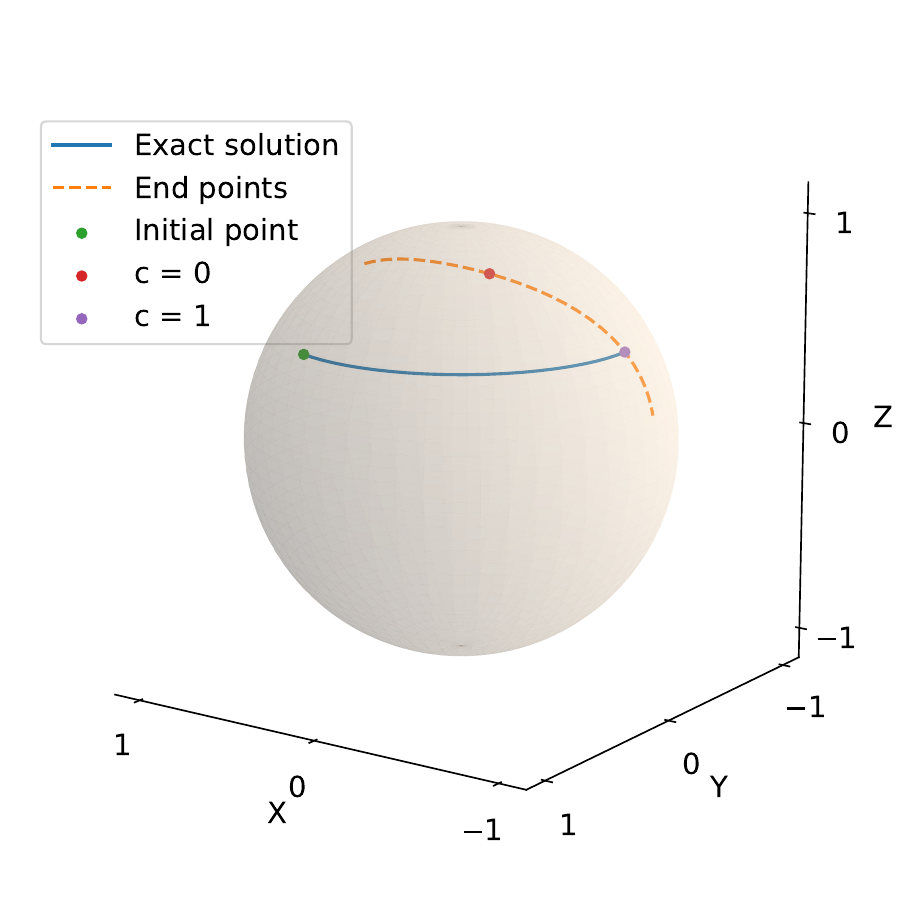}
    \includegraphics[trim = 0 0 0 20, width = 0.57\textwidth]{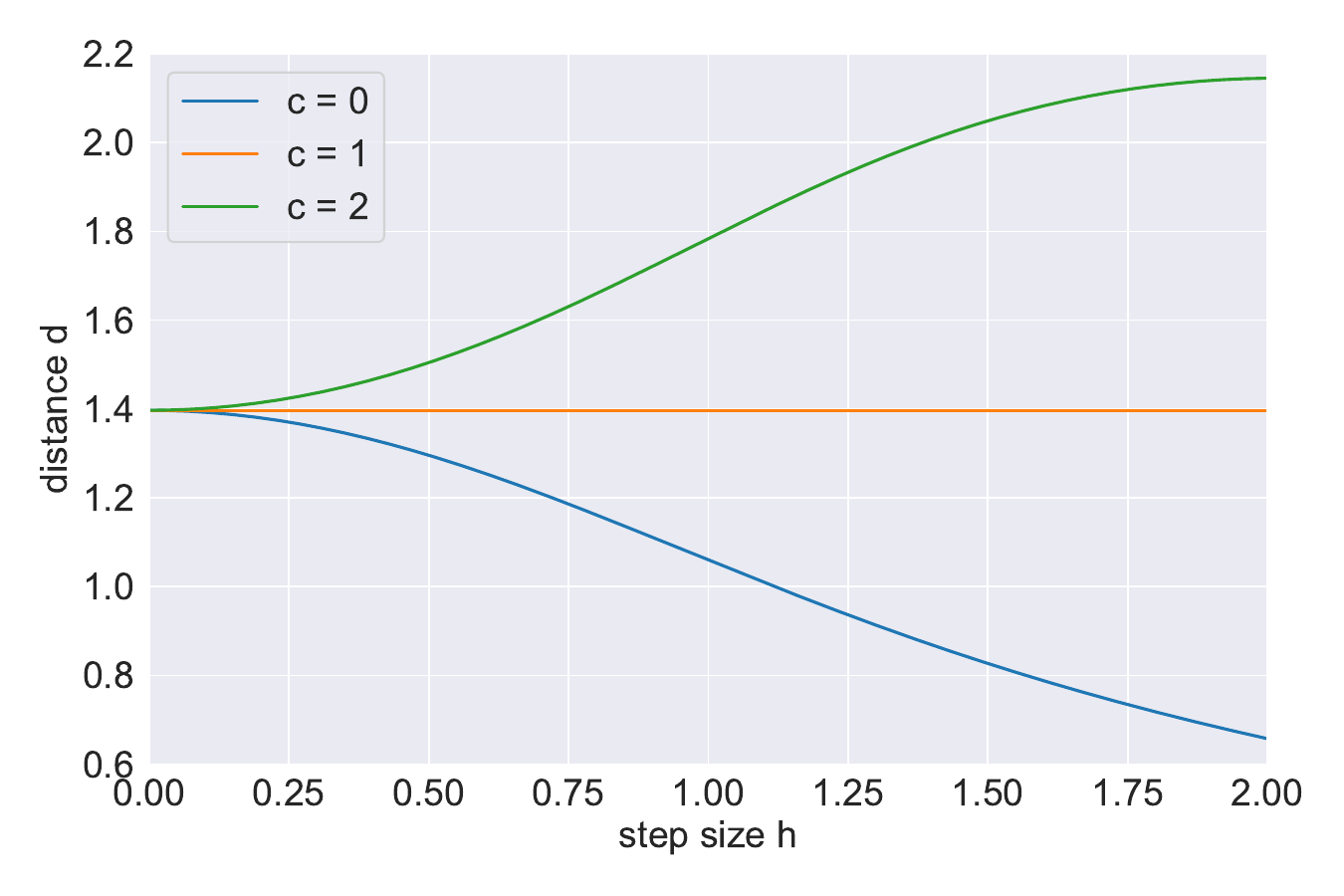}
    \caption{Left: The Implicit Lie-Euler method applied to \eqref{eq:iso_eq} with stepsize $h=2$ and $c\in[-2,2]$. The dashed curve shows the arrival point parametrized by $c$. The solid line depicts the exact solution. Right: the distance between two solutions for increasing stepsizes with three different choices of isotropy parameter $c\in\{0,1,2\}$.}
    \label{fig:isotropy_plot_different_c_values}
\end{figure}
\end{example}

\section{A bound for the global error}\label{sec:glerr}
For the next result, we first consider an initial value problem on the finite-dimensional Riemannian manifold $(M, g),$
\begin{equation}\label{eq:ivp}
\begin{cases}
    \dot{y} = \left.X\right|_y\\ 
    y(0) = y_0 \in M
\end{cases},
\end{equation} 
where $X$ is a smooth vector field, $y_0 \in M$ is the initial value. 
The following theorem is a generalization of Theorem 2 from \cite{celledoni20epm}, where we 
use the 
constant $\nu$ from the monotonicity condition rather than the operator norm of $\nabla X$. 
\begin{theorem}
    Let $(M, g)$ be a Riemannian manifold and fix $y_0 \in M$. Let $\mathcal{U}_{y_0} \subset M$ be a geodesically convex set and $X$ a vector field on $M$ satisfying the monotonicity condition \eqref{eq:X_contractive_in_U} on $\mathcal{U}_{y_0}$ with constant $\nu \in \mathbb{R}$. 
    Let $y(t)=\exp (tX)y_0$ 
    be defined and contained in $\mathcal{U}_{y_0}$ for $t \in [0, t^*], t^* > 0$. 
    Let $\phi_{h,X}$ be a numerical method $y_{j+1}=\phi_{h,X}\left(y_j\right), j = 0, \ldots, k-1$, well defined and contained in $\mathcal{U}_{y_0}$ for any $h$ such that $0 < h \leq h^* \leq t^*, t^* = hk$, whose local error can be bounded for some $p \in \mathbb{N}$ and $C \in \mathbb{R}$ as
    \begin{equation}\label{eq:loc_error}
        d\left(\exp(hX)y, \phi_{h,X}(y)\right) \leq C h^{p+1} \quad \text { for all } y \in \mathcal{U}_{y_0}, h \in (0, h^*].
\end{equation} 
Then, for all $k>0$, the global error is bounded as
\begin{equation}\label{eq:gl_err_est_cases}
    d\left(y(t^*), y_k\right) \leq 
    \begin{cases}
         \frac{C}{\nu}\left(\mathrm{e}^{t^* \nu}-1\right) h^p &\quad \text { for } \nu > 0\\
     Ct^*h^{p} &\quad \text { for } \nu = 0,\\
    \frac{C\mathrm{e}^{-\nu h}}{\nu}\left(\mathrm{e}^{t^*\nu}-1\right) h^p &\quad \text { for } \nu<0
    \end{cases} \quad h \in (0, h^*].
\end{equation} 
\end{theorem}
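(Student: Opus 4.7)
\medskip

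The plan is to adapt the classical \emph{Lady Windermere's fan} technique to the Riemannian setting, replacing the use of the Euclidean norm with the Riemannian distance and invoking Theorem \ref{thm:bound_exact_flow} to propagate one-step local errors along the exact flow. Concretely, I would introduce the auxiliary points
\[
z_j := \exp\bigl((t^* - jh)X\bigr)\,y_j, \qquad j = 0, 1, \ldots, k,
\]
so that $z_0 = y(t^*)$ and $z_k = y_k$, and then apply the triangle inequality to the telescoping sum $d(y(t^*), y_k) \leq \sum_{j=0}^{k-1} d(z_j, z_{j+1})$.

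Next, I would observe that $z_j$ and $z_{j+1}$ are images of the two points $\exp(hX)y_j$ and $\phi_{h,X}(y_j) = y_{j+1}$ under the same exact flow $\exp\bigl((t^*-(j+1)h)X\bigr)$. Hence Theorem \ref{thm:bound_exact_flow}, together with the local-error hypothesis \eqref{eq:loc_error}, yields
\[
d(z_j, z_{j+1}) \;\leq\; \mathrm{e}^{\nu(k-j-1)h}\, d\bigl(\exp(hX)y_j,\, \phi_{h,X}(y_j)\bigr) \;\leq\; C h^{p+1}\, \mathrm{e}^{\nu(k-j-1)h}.
\]
Summing over $j$ and reindexing $i = k-1-j$ reduces the problem to a geometric sum:
\[
d(y(t^*), y_k) \;\leq\; C h^{p+1} \sum_{i=0}^{k-1} \mathrm{e}^{\nu i h}.
\]

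Finally, I would split into the three cases and use the elementary inequality $\mathrm{e}^x - 1 \geq x$ to remove a factor of $h$ from the sum. For $\nu>0$ the identity $\sum_{i=0}^{k-1}\mathrm{e}^{\nu ih} = (\mathrm{e}^{\nu t^*}-1)/(\mathrm{e}^{\nu h}-1)$ combined with $\mathrm{e}^{\nu h}-1 \geq \nu h$ gives the first bound; for $\nu=0$ the sum is simply $k = t^*/h$; and for $\nu<0$ I would rewrite the geometric sum as $(1-\mathrm{e}^{\nu t^*})/(1-\mathrm{e}^{\nu h})$ and use $\mathrm{e}^{-\nu h}-1 \geq -\nu h$, which is precisely what produces the extra factor $\mathrm{e}^{-\nu h}$ appearing in the stated bound.

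The genuinely delicate step is verifying that each auxiliary trajectory $t\mapsto \exp(tX)y_j$ remains inside $\mathcal{U}_{y_0}$ for the whole interval $[0, t^*-jh]$, so that Theorem \ref{thm:bound_exact_flow} is legitimately applicable at every stage of the fan; this must be extracted from the standing assumptions on forward existence of the exact flow and on $\phi_{h,X}$-invariance of $\mathcal{U}_{y_0}$. The only other minor subtlety is pinning down the sharp version of the geometric-sum inequality in the $\nu<0$ case so that the constant matches the claim exactly.
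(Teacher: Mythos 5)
Your argument is correct and reaches exactly the intermediate bound the paper obtains, but by a genuinely different route. You use the Lady Windermere's fan decomposition: each local error is transported to the final time by the exact flow via Theorem \ref{thm:bound_exact_flow}, and the resulting geometric sum $Ch^{p+1}\sum_{i=0}^{k-1}\mathrm{e}^{\nu i h}=Ch^{p+1}\frac{\mathrm{e}^{t^*\nu}-1}{\mathrm{e}^{h\nu}-1}$ is then estimated case by case with $\mathrm{e}^{x}-1\geq x$, exactly as in the paper (your treatment of $\nu<0$, producing the factor $\mathrm{e}^{-\nu h}$, matches the paper's). The paper instead runs a one-step error recursion: with $E_j=d(y(jh),y_j)$ it shows $E_{j+1}\leq \mathrm{e}^{h\nu}E_j+Ch^{p+1}$, using the triangle inequality at each step, the flow estimate (quoted from a Gr\"onwall-type inequality of Kunzinger et al., equivalently Theorem \ref{thm:bound_exact_flow}) over a single step of length $h$, and the local error bound; solving the recursion gives the same sum \eqref{eq:e_k_ineq}. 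The practical difference is the one you flag yourself: your fan needs the auxiliary trajectories $t\mapsto\exp(tX)y_j$ to exist and remain in $\mathcal{U}_{y_0}$ over the long intervals $[0,t^*-jh]$, which does not follow from the stated hypotheses (they only place the exact solution through $y_0$ and the numerical iterates in $\mathcal{U}_{y_0}$) and would have to be added, e.g.\ as forward $X$-invariance of $\mathcal{U}_{y_0}$ in the spirit of a non-expansive system. The paper's recursion only needs the flow comparison between $y(jh)$ and $y_j$ over one step of length $h$ (an assumption essentially already implicit in the local-error hypothesis \eqref{eq:loc_error}), so it gets by with a weaker invariance requirement; that is the main advantage of the paper's decomposition, while yours has the merit of making the propagation of each local error completely explicit.
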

\begin{proof} Let us denote the global error as $E_k := d\left(y(t^*), y_k\right)$. For $j = 0, \ldots, k-1,$
\begin{align}
    E_{j+1} & \leq d\left(\exp(hX)y(j h), \exp(hX)y_j\right)+d\left(\exp(hX)y_j, \phi_{h,X}\left(y_j\right)\right) \label{eq:first_ineq} \\
    & \leq \mathrm{e}^{h \nu} d\left(y(j h), y_j\right) + d\left(\exp(hX)y_j, \phi_{h,X}\left(y_j\right)\right) \label{eq:second_ineq} \\
    & = \mathrm{e}^{h \nu} E_j + d\left(\exp(hX)y_j, \phi_{h,X}\left(y_j\right)\right) \nonumber\\
    & \leq \mathrm{e}^{h \nu} E_j+C h^{p+1}. \label{eq:fourth_ineq} 
\end{align}
\eqref{eq:first_ineq} is the triangle inequality, where the first term is the error at $j h$ propagated over one step and the second term is the local error. \eqref{eq:second_ineq} is obtained via a Grönwall-type inequality of \cite{kunzinger06gge} for the first term. Using the local error estimate \eqref{eq:loc_error} for the second term we obtain the recursion in \eqref{eq:fourth_ineq}. Considering $t^* = hk$, $\nu \neq 0$  and summing over $j = 1, \ldots, k-1$, we obtain
\begin{equation}\label{eq:e_k_ineq}
E_k \leq C \frac{\mathrm{e}^{t^* \nu}-1}{\mathrm{e}^{h \nu}-1} h^{p+1}.
\end{equation} 
For $\nu > 0$, $\mathrm{e}^{\nu h} - 1 > \nu h$ and \eqref{eq:e_k_ineq} becomes equivalent to the first estimate in \eqref{eq:gl_err_est_cases}. 
For $\nu <0$, $1 - \mathrm{e}^{-\nu h} < \nu h$ and \eqref{eq:e_k_ineq} becomes equivalent to the third estimate in \eqref{eq:gl_err_est_cases}. 
\end{proof}

\begin{remark}
In cases where the monotonicity constant $\nu\ll 0$, in the sense that one can assume $\nu h\rightarrow -\infty$ as $h\rightarrow 0$, see e.g. \cite{hairer96sod}*{Ch IV.15}, one gains an order of convergence such that the global error essentially equals the local error.
\end{remark}

\section{Conclusions and further work}
The notion of B-stability proposed in \cite{butcher1975} for Euclidean spaces has been generalized to Riemannian manifolds.
Building on the work by Simpson-Porco and Bullo \cite{SIMPSONPORCO201474} on contraction systems in Riemannian manifolds, we expressed the B-stability condition in terms of the Riemannian distance function. For this first study, only geodesic versions of the implicit Euler method and the implicit midpoint rule were considered. We proved that in the Riemannian setting, the geodesic implicit Euler method is B-stable for manifolds of non-positive sectional curvature, but not necessarily in positively curved spaces.
Through numerical experiments on the 2-sphere, one finds strong evidence that the GIE method is indeed not B-stable in general.
Another observation was that, contrary to what has been proved in Euclidean spaces, the nonlinear equations associated with the GIE method do not have a unique solution for non-expansive systems. Finally, we showed that the monotonicity constant can be used to obtain improved global error estimates compared to \cites{curry20col, celledoni20epm}.

Many open questions remain for the B-stability properties of numerical methods applied to problems on Riemannian manifolds. There exist many classes of numerical integrators that could be analyzed in this setting. In mechanical engineering, most of the problems of interest are set in manifolds of positive sectional curvature, such as $SO(d), SE(d)\; d=2,3$, $S^2$, $TS^2$ and direct or semidirect products of these. It may also be of interest to consider explicit integrators, in which case B-stability must be replaced by some conditional form of stability, such as the circle contractivity proposed in \cite{dahlquist1979gdo}.

\end{document}